\newtheorem{theorem}{Theorem}
\newtheorem{lemma}{Lemma}
\theoremstyle{definition}
\newtheorem{definition}{Definition}
\newcommand{\beql}[1]{\begin{equation}\label{#1}}
\newcommand{\eeq}{\end{equation}}
\newcommand{\comment}[1]{}
\newcommand{\Abs}[1]{{\left|{#1}\right|}}
\newcommand{\Lone}[1]{{\left\|{#1}\right\|_{L^1}}}
\newcommand{\Mean}{{\bf E}}
\newcommand{\Prob}[1]{{{\bf{Pr}}\left[{#1}\right]}}
\newcommand{\Set}[1]{{\left\{{#1}\right\}}}
\newcommand{\RR}{{\mathbb R}}
\newcommand{\CC}{{\mathbb C}}
\newcommand{\ZZ}{{\mathbb Z}}
\newcommand{\One}[1]{{\mathbf 1}\left(#1\right)}
\newcommand{\ft}[1]{\widehat{#1}}
\newcounter{rem}
\newcounter{othm}
\def\theothm{\Alph{othm}} 
\newenvironment{othm}{
  \sf
  \vskip 0.10in
  \refstepcounter{othm}
  \noindent{\bf Theorem\ \theothm}
}{\vskip 0.10in}
\newcounter{open}
\begin{document}

\title[The discrepancy of a needle on a checkerboard, II]{The discrepancy of a needle on a checkerboard, II}

\author[A. Iosevich]{{Alex Iosevich}}
\address{A.I.: Department of Mathematics, University of Missouri, Columbia MO 65211-4100, U.S.A}
\email{iosevich@math.missouri.edu}

\author[M. Kolountzakis]{{Mihail N. Kolountzakis}}
\address{M.K.: Department of Mathematics, University of Crete, Knossos Ave., GR-714 09, Iraklio, Greece}
\email{kolount@gmail.com}

\thanks{
MK: Supported by research grant No 2569 from the Univ.\ of Crete.
}

\date{November 2008}

\begin{abstract}
Consider the plane as a checkerboard, with each unit square
colored black or white in an arbitrary manner.
In a previous paper we showed that for any such coloring there are straight line segments, of arbitrarily large length, such that the difference of their white length minus their black length, in absolute value, is at least the square root of their length, up to a multiplicative constant.
For the corresponding ``finite'' problem ($N \times N$ checkerboard) we had proved that we can
color it in such a way that the above quantity is at most $C \sqrt{N \log N}$,
for any placement of the line segment. In this followup we show that it is possible to color the infinite checkerboard with two colors so that for any line segment $I$ the excess of one color over another is bounded above by $C_\epsilon \Abs{I}^{\frac12+\epsilon}$, for any $\epsilon>0$. We also prove lower bounds for the discrepancy of circular arcs. Finally, we make some observations regarding the $L^p$ discrepancies for segments and arcs, $p<2$, for which our $L^2$-based methods fail to give any reasonable estimates.
\end{abstract}

\maketitle

\tableofcontents

\section{Introduction to checkerboard discrepancy}

In a previous paper \cite{kolountzakis} we answered a question posed to us by
\href{http://users.uoa.gr/~ppapazog/}{P. Papasoglu} \cite{papasoglu}:
\begin{figure}[h]
 \begin{center} \resizebox{5cm}{!}{\input 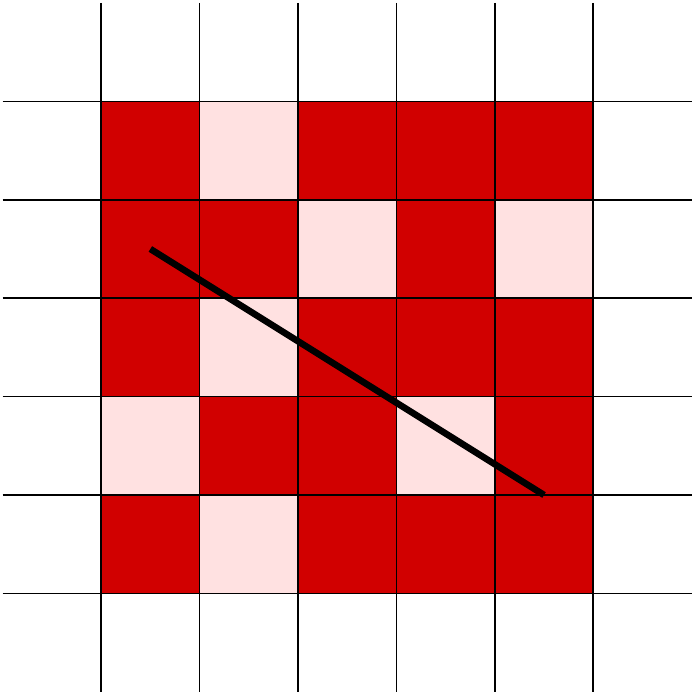_t} \end{center}
\caption{A colored checkerboard with a needle on it.}
\label{fig:board}
\end{figure}
\begin{quote}
Consider the plane as a checkerboard, with each unit square
$$
[m, m+1) \times [n,n+1),\ \  m,n \in \ZZ,
$$
colored black or white.
Is it possible that there is such a coloring and a finite constant $M$ such that
for any line segment $I$ placed on the checkerboard the difference of its white length minus
its black length is, in absolute value, at most $M$? (See Fig.\ \ref{fig:board}.)
\end{quote}

We proved in \cite{kolountzakis} that the answer to the above question is negative. In particular we
showed that for any checkerboard coloring there are aribtrarily large straight line segments $I$
whose discrepancy with respect to the given checkerboard coloring
(the excess of black vs white length, in absolute value)
is at least $C \sqrt{\Abs{I}}$, where $C$ is an absolute positive constant.
Our approach was Fourier analytic and used rather strongly the fact that the object in question was
straight.

In this paper we extend our results on checkerboard discrepancy to circular arcs:
\begin{theorem}\label{th:arcs}
Suppose that the function $f:\RR^2 \to \Set{-1, +1}$ is constant in each cell $x+[0,1)^2$, $x \in \ZZ^2$.
Then for arbitrarily large $R>0$ there is a circular arc $I$ of radius $R$ such that
\beql{discrepancy-lower-bound}
\Abs{ \int_I f } \ge C \sqrt{R},
\eeq
where $C>0$ is an absolute constant.
\end{theorem}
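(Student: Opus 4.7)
The plan is to attack this Fourier-analytically, in the spirit of \cite{kolountzakis}, by averaging an $L^2$ quantity over arcs and applying Plancherel, now with the Fourier transform of circular arc-length measure (a Bessel function) playing the role of the line-segment kernel used there. As a first reduction, I would pass from arcs to \emph{full} circles. Setting
\[ T_R(x) := (f * \sigma_R)(x) = \int_{|y-x| = R} f(y)\, ds(y), \]
where $\sigma_R$ is the arc-length measure on the circle of radius $R$ centered at the origin, a bound $\|T_R\|_\infty \geq 4C\sqrt{R}$ implies the theorem: chop the extremal circle into four quarter-arcs; by the triangle inequality, one quarter-arc $I$ satisfies $\Abs{\int_I f} \geq C\sqrt{R}$.

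To bound $\|T_R\|_\infty$ from below for arbitrarily large $R$, I would periodize. Fix an integer $M \gg R$, let $f_M$ be the $M$-periodic coloring that agrees with $f$ on $[0,M)^2$, and expand $f_M(x) = \sum_{k \in \ZZ^2} a_k\, e^{2\pi i k\cdot x/M}$. Parseval gives $\sum_k |a_k|^2 = 1$, and Plancherel gives, for every $r>0$,
\[ \frac{1}{M^2}\int_{[0,M)^2} |f_M * \sigma_r|^2\, dx \;=\; \sum_{k \in \ZZ^2} |a_k|^2\, |\hat\sigma_r(k/M)|^2, \qquad \hat\sigma_r(\xi) = 2\pi r\, J_0(2\pi r|\xi|). \]
Averaging $r$ over $[R, 2R]$ and invoking the Bessel asymptotic $|J_0(t)|^2 \sim \frac{2}{\pi t}$ (the oscillatory $\cos^2$ factor averages to $1/2$) shows the right-hand side is bounded below by a constant times $R\sum_{|k|/M \gtrsim 1/R} |a_k|^2\, M/|k|$.

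The step I expect to be the main obstacle is controlling \emph{where} the Fourier mass $\sum_k |a_k|^2 = 1$ actually sits. Because $f_M$ is constant on unit cells, its coefficients factor as $a_k = G(k/M)\tilde c(k)/M^2$, with $G$ the Fourier transform of the indicator of $[0,1)^2$ and $\tilde c$ the discrete Fourier transform of the color array. Since $|G|$ decays polynomially, the Fourier tail $\sum_{|k| \geq CM}|a_k|^2$ is $O(1/C^2)$, so for $C$ an absolute constant a definite fraction of the total mass sits at $|k| \leq CM$. I would then dichotomize on the low-frequency mass $L := \sum_{|k| \leq M/R}|a_k|^2$. If $L \leq \tfrac12$, the intermediate range $M/R < |k| \leq CM$ carries a constant fraction of the mass with $M/|k| \gtrsim 1$, so the displayed sum is $\gtrsim 1$ and some $r \in [R, 2R]$ satisfies $\|T_r\|_\infty \gtrsim \sqrt{R}$. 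If instead $L \geq \tfrac12$, then the low-frequency terms alone (where $|\hat\sigma_r(\xi)| \approx 2\pi r$) give $\|T_r\|_{L^2}^2/M^2 \gtrsim r^2$, hence $\|T_r\|_\infty \gtrsim r \gg \sqrt{R}$.

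A final, more routine step is to transfer the periodic estimate back to $f$: taking $M$ sufficiently large (say $M \geq 10R$) allows the extremal center to be translated to distance $\geq R$ from the boundary of $[0,M)^2$, where $f$ and $f_M$ agree on the entire circle of radius $r$, yielding \eqref{discrepancy-lower-bound} for the original coloring.
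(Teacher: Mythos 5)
Your overall strategy is the same as the paper's: average the $L^2$ norm of $f*\sigma_r$ over a range of radii comparable to $R$, apply Plancherel, use the stationary-phase/Bessel asymptotics of $\ft{\sigma_r}$ to turn the radial average into a quantity controlled by the total Fourier mass of the coloring, locate that mass in a suitable range of frequencies using the cell structure of $f$, and finally pass from a circle of large discrepancy to an arc by pigeonholing over a bounded number of pieces. The implementation differs in two respects that are worth noting. First, you periodize ($f_M$ and Fourier series) where the paper truncates ($f\cdot \one_{[0,N]^2}$ and the continuous transform); both are workable. Second, your dichotomy on the low-frequency mass $L=\sum_{\Abs{k}\le cM/R}\Abs{a_k}^2$ is a genuinely nice variant: where the paper must prove that the mass below frequency $a/N$ is a small fraction of the total (via $\Abs{\ft f(\xi)}\le \Lone{f}$), you simply observe that large low-frequency mass forces the even stronger bound $\Linf{T_r}\gtrsim r$. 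One small point you gloss over: for $\Abs{k}$ just above $cM/R$ the argument $2\pi r\Abs{k}/M$ need not be large, so the asymptotic $\Abs{J_0(t)}^2\sim \frac{2}{\pi t}$ does not apply there; you need the paper's Lemma~1-type statement that $\int_x^{c_1x}\Abs{\ft{\sigma_1}(u)}^2\,du\ge c_2$ uniformly down to bounded $x$ (a compactness argument using that $J_0$ does not vanish on intervals).

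The one step that fails as written is the transfer back from $f_M$ to $f$. You cannot ``translate the extremal center to distance $\ge R$ from the boundary of $[0,M)^2$'': the point $x^*$ where $\Abs{f_M*\sigma_r}$ is large is determined by the coloring, is only located modulo $M\ZZ^2$, and may well lie near the boundary of every fundamental domain, in which case the circle $C(x^*,r)$ meets translates of $[0,M)^2$ on which $f_M$ disagrees with $f$. The repair is the same device you already use in your first reduction: cut $C(x^*,r)$ along the grid $M\ZZ^2+\partial[0,M)^2$ into at most four arcs $A_j=C(x^*,r)\cap([0,M)^2+Mv_j)$; on each piece $f_M(y)=f(y-Mv_j)$, so $\int_{A_j}f_M=\int_{A_j-Mv_j}f$ with $A_j-Mv_j$ a genuine arc of radius $r$ in the plane, and one of the four has $\Abs{\int f}\ge \frac14\Abs{\int_{C(x^*,r)}f_M}\ge c\sqrt R$. (This is precisely the mechanism by which the paper's Theorem~3, stated for full circles on the truncated board, yields only arcs in Theorem~1.) With that correction, and with the low-frequency cutoff taken as $cM/R$ rather than $M/R$ so that $J_0$ is genuinely near $1$ there, your argument goes through.
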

We give the proof of Theorem \ref{th:arcs} in \S\ref{sec:lower}.

In \cite{kolountzakis} we also showed how to construct a $N \times N$ checkerboard coloring
so that the discrepancy of any straight line segment with respect to that coloring is $O(\sqrt{N \log N})$.
The method was simply a randomized assignment of the colors, and we remarked in \cite{kolountzakis}
that we could not deduce from that the existence of an infinite checkerboard coloring (the entire plane)
with respect to which the discrepancy of any straight line segment $I$ was  $o({\Abs{I}})$.
We rectify this situation in \S\ref{sec:upper} where we show how to construct such a good coloring of the infinite plane.
\begin{theorem}\label{th:upper}
There is a function $f:\RR^2 \to \Set{-1, +1}$, constant in each cell $x+[0,1)^2$,
$x \in \ZZ^2$, such that, for each $\epsilon>0$ and line segment $I$
\beql{discr-ub}
\Abs{\int_I f} \le C_\epsilon \Abs{I}^{\frac12+\epsilon}.
\eeq
Here $C_\epsilon$ is a constant that depends on $\epsilon$ only.
\end{theorem}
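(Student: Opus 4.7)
My plan is to combine Hoeffding's inequality with a net discretization of the space of line segments and the Borel--Cantelli lemma. Starting from a random $\pm 1$ coloring and then upgrading the construction to handle the infinite plane, I would argue that almost every realization of a suitable random coloring satisfies (\ref{discr-ub}).

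For a fixed segment $I$, write $X_I := \int_I f = \sum_{c\in\ZZ^2} \ell_c(I) f(c)$, where $\ell_c(I) = \Abs{I \cap (c+[0,1)^2)} \le \sqrt{2}$ and $\sum_c \ell_c(I) = \Abs{I}$. If the $f(c)$ are i.i.d.\ Rademacher, then Hoeffding's inequality gives
\[
\Prob{\Abs{X_I} \ge t} \le 2 \exp(-c_0 t^2 / \Abs{I}),
\]
so with $t = \Abs{I}^{1/2+\epsilon}$ the tail is $\exp(-c_0 \Abs{I}^{2\epsilon})$, decaying super-polynomially in $\Abs{I}$. To convert this into a supremum bound over a family of segments I would use the Lipschitz-type estimates that translating $I$ by a vector of length $\delta$ changes $X_I$ by at most $O(\delta \Abs{I})$ and rotating about its midpoint by angle $\alpha$ by at most $O(\alpha \Abs{I}^2)$. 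For each dyadic scale $L = 2^n$, it follows that the segments of length in $[L, 2L]$ with midpoint in a ball $B(0,R)$ admit a net of cardinality polynomial in $R$ and $L$ whose $X$-values approximate those of all nearby segments to within an error $\ll L^{1/2+\epsilon}$. A union bound over this net combined with a Borel--Cantelli argument in $n$ then yields (\ref{discr-ub}) almost surely for all segments, of length at least some $L_0(\epsilon)$, contained in any fixed ball; segments shorter than $L_0(\epsilon)$ are handled by the trivial estimate $\Abs{X_I} \le \Abs{I}$.

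The main obstacle is to extend this local bound to a bound that is uniform over the entire plane with a constant depending only on $\epsilon$. A purely i.i.d.\ coloring cannot work: by translation invariance the tail estimate is identical everywhere, and the family $\{I_k\}$ of disjoint parallel horizontal segments of length $L$ at positions $2kL$ along the $x$-axis yields essentially independent copies $X_{I_k}$ of variance $\sim L$, whose supremum is almost surely infinite; hence $\sup_I \Abs{X_I}/\Abs{I}^{1/2+\epsilon} = \infty$ a.s.\ for the i.i.d.\ model. To overcome this, my plan is to replace the i.i.d.\ coloring by a spatially structured random construction, for instance a hierarchical/multi-scale coloring in which the randomness at scale $L$ is organized on $L \times L$ blocks so that segments of length $\asymp L$ benefit from cancellation at exactly that level while contributions from other scales remain controlled, and then run a careful Borel--Cantelli argument across both scales and locations. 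Designing this multi-scale coupling so that it produces a single deterministic $\pm 1$-valued $f$ obeying (\ref{discr-ub}) uniformly in the location of $I$ is where I expect the main effort of the proof to lie.
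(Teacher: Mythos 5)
Your first paragraph and your diagnosis of the obstruction are both sound: the i.i.d.\ Rademacher coloring does fail globally for exactly the reason you give (for each fixed $L$ there are a.s.\ monochromatic runs of length $L$ somewhere, so $\sup_I \Abs{\int_I f}/\Abs{I}^{1/2+\epsilon}=\infty$), and a hierarchical, block-structured randomization is indeed the right remedy --- it is precisely what the paper does. But your proposal stops at the point where the theorem actually lives. Saying that the randomness at scale $L$ should be ``organized on $L\times L$ blocks so that segments of length $\asymp L$ benefit from cancellation at exactly that level'' does not yet constitute a construction, and the quantitative bookkeeping across infinitely many scales is not a routine afterthought: a multiplicative loss is incurred at every scale, and one must check that the product of these losses over all scales stays within the $\epsilon$ in the exponent. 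That is the entire content of the proof.

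Concretely, the missing argument is the following induction. Set $N_1=2$, $N_{k+1}=N_kM_k$, tile the $N_{k+1}\times N_{k+1}$ board by $M_k\times M_k$ copies of the $N_k\times N_k$ coloring, each multiplied by an independent random sign (fixing the central copy so the colorings nest and define a coloring of the whole plane). The inductive hypothesis is that every segment contained in a single $N_k\times N_k$ super-cell has discrepancy at most $\phi(N_k)/100$ with $\phi(x)=Kx^{1/2+\epsilon}$. A segment in an $N_{k+1}$-cell meets at most $CM_k$ super-cells, so its discrepancy is a signed sum $\sum_j\epsilon_jd_j$ of only $n\le CM_k$ independent terms each bounded by $\phi(N_k)/100$ --- this reduction from $\sim L$ summands to $\sim M_k$ summands is what defeats the counterexample you identified. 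Hoeffding plus a union bound over a polynomial-size net of segments (positive probability at each finite stage suffices; no Borel--Cantelli is needed) gives a bound $C\sqrt{M_k\log N_k}\,\phi(N_k)$, and one must then verify that this is at most $\phi(N_{k+1})/300$, which forces the specific choice $M_k\sim\log^{1/(2\epsilon)}N_k$. Without specifying the block sizes $M_k$, the inductive discrepancy bound, and this closing inequality, the argument does not establish the theorem; your proposal explicitly defers exactly these steps.
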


The questions dealt with in this paper fall naturally into the subject of
geometric discrepancy \cite{beck-chen,matousek}.
In this research area there is usually an underlying measure $\mu$ as well as a family ${\mathcal F}$ of
allowed subsets of Euclidean space, on which the measure $\mu$ is evaluated and upper and lower bounds
are sought on the range of $\mu$ on ${\mathcal F}$.
The most classical case is that where $\mu$ is a normalized collection of points masses in the unit square minus Lebesgue
measure and ${\mathcal F}$ consists of all axis-aligned rectangles in the unit square.
Usually the underlying measure $\mu$ has an atomic part (point masses) and the family ${\mathcal F}$
consists of ``fat'' sets.
In the problem we are studying here the measure $\mu$ has no atomic part (it is absolutely continuous)
and the collection ${\mathcal F}$ consists of all straight line segments or circular arcs, which may be considered thin sets,
and, strictly speaking, $\mu$ is $0$ on these sets.
The work in the bibliography which appears to be most related to this paper is that of Rogers \cite{rogers} where the measure $\mu$ is the same as here but the family ${\mathcal F}$ consists not of straight line segments but of thin strips.

\section{Proof of the lower bound for circular arcs}
\label{sec:lower}

Here we prove Theorem \ref{th:arcs}. It follows directly from the following theorem.
\begin{theorem}\label{th:circles}
Suppose the function $f:\RR^2 \to \Set{-1, +1}$ is zero outside
the square $[0,N]\times[0,N]$ and is constant on each of
the squares $(i,j)+[0,1)^2$, $0\le i, j<N$.
Then there is a circle $K$ of radius $R$, $N/5 < R < N/4$, such that
$$
\Abs{\int_K f} \ge C N^{1/2},
$$
where $C$ is a positive constant.
\end{theorem}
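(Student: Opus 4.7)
The plan is to lower-bound the $L^2$ average of
$D_R(c) := \int_{c+R\mathbb S^1} f\,d\sigma$
over both the center $c \in \RR^2$ and the radius $R \in [N/5, N/4]$, and then convert this to a pointwise bound by a simple volume count. Let $\sigma_R$ denote arc-length measure on $R\mathbb S^1$; since $\sigma_R$ is even, $D_R = f*\sigma_R$ and Plancherel gives
\[
\int_{\RR^2}|D_R(c)|^2\,dc \;=\; \int_{\RR^2}|\hat f(\xi)|^2\,|\hat\sigma_R(\xi)|^2\,d\xi,\qquad \hat\sigma_R(\xi) = 2\pi R\,J_0(2\pi R|\xi|).
\]
Averaging this identity over $R$, write $A(\xi) := \int_{N/5}^{N/4}|\hat\sigma_R(\xi)|^2\,dR$. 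The standard asymptotic $|J_0(x)|^2 = \tfrac{1}{\pi x}(1+\sin 2x) + O(x^{-2})$ gives $|\hat\sigma_R(\xi)|^2 = \tfrac{2R}{|\xi|}(1+\sin 4\pi R|\xi|) + O(1)$; one integration by parts in $R$ over the interval $[N/5,N/4]$ (of length $\sim N$) kills the oscillating term for $|\xi|\gtrsim 1/N$, while for $|\xi|\lesssim 1/N$ the trivial estimate $J_0\approx 1$ suffices, yielding
\[
A(\xi) \;\gtrsim\; \frac{N^2}{\max(|\xi|,\,1/N)}.
\]

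The heart of the argument is to convert the constraint ``$f = \pm 1$ on unit cells'' into low-frequency Fourier mass. Write $f = g*\chi_Q$ with $Q = [0,1)^2$ and $g = \sum_{0\le i,j<N}\epsilon_{ij}\,\delta_{(i,j)}$; then $\hat f = \hat g\cdot\hat\chi_Q$, and $\hat g$ is $\ZZ^2$-periodic. Unfolding over the fundamental domain $\TT^2 := [-\tfrac12,\tfrac12]^2$,
\[
\int_{\RR^2}|\hat f|^2 A\,d\xi \;=\; \int_{\TT^2}|\hat g(\xi)|^2\,W(\xi)\,d\xi,\qquad W(\xi) := \sum_{n\in\ZZ^2}|\hat\chi_Q(\xi+n)|^2\,A(\xi+n).
\]
Retaining only the $n=0$ term, using $|\hat\chi_Q(\xi)|^2 \ge (2/\pi)^4$ on $\TT^2$, and using $A(\xi)\gtrsim N^2$ there (since $|\xi|\le 1$), we obtain $W(\xi)\gtrsim N^2$. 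Combined with the Parseval identity $\int_{\TT^2}|\hat g|^2\,d\xi = \sum_{i,j}\epsilon_{ij}^2 = N^2$, this gives
\[
\int_{N/5}^{N/4}\int_{\RR^2}|D_R(c)|^2\,dc\,dR \;\gtrsim\; N^4.
\]

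To conclude, note that $D_R(c)=0$ whenever the circle $c+R\mathbb S^1$ is disjoint from $[0,N]^2$, so the effective $c$-region has area $O(N^2)$; combined with the $R$-interval of length $\sim N$, the integration region has volume $O(N^3)$, and pigeonhole produces $(c,R)$ with $|D_R(c)|^2\gtrsim N$, i.e.\ $\Abs{\int_K f}\gtrsim \sqrt N$. The main technical hurdle is the uniform lower bound on $A(\xi)$, which requires handling the transition $|\xi|\sim 1/N$ carefully and checking that the length of $[N/5,N/4]$ is large enough for the cosine-oscillation to average to its mean rather than cancel; the main conceptual ingredient is the unfolding, which exploits the $\ZZ^2$-periodicity of $\hat g$ to reduce the global estimate to the pointwise positivity of $|\hat\chi_Q|^2$ on a single fundamental domain.
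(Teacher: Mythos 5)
Your proposal is correct, and it follows the same Fourier--analytic skeleton as the paper (Plancherel for $D_R=f*\sigma_R$, averaging $\Abs{\ft{\sigma_R}}^2$ over $R\in[N/5,N/4]$, then a volume count over the $O(N^3)$ effective $(c,R)$-region), but it swaps out the paper's key input for a different one. The paper imports from its predecessor the concentration statement of Lemma \ref{lm:decay} --- at least a third of $\int\Abs{\ft f}^2=N^2$ lies in the annulus $a/N<\Abs\xi<A$ --- and then only needs the radial average $\frac{1}{\Abs\xi}\int_{\alpha N\Abs\xi}^{\beta N\Abs\xi}\Abs{\ft{\sigma_1}(u)}^2\,du\ge c_2$ on that annulus. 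You instead factor $f=g*\chi_Q$ and unfold $\int\Abs{\ft f}^2A$ over the fundamental domain $\TT^2$, reducing everything to Parseval for the $\pm1$ coefficients of the trigonometric polynomial $\ft g$ together with the pointwise bounds $\Abs{\ft{\chi_Q}}^2\ge(2/\pi)^4$ and $A\gtrsim N^2$ on $\TT^2$. This buys self-containedness (no appeal to the earlier paper) and eliminates both frequency cutoffs: the low-frequency region $\Abs\xi\lesssim 1/N$ need not be excised since there $A(\xi)\gtrsim N^3$, and the restriction to $\TT^2$ supplies the upper cutoff for free. The one place where both arguments must do the same nontrivial work is the transition regime $N\Abs\xi\asymp 1$, where neither the Bessel asymptotics nor $J_0\approx 1$ applies; you flag this, and the resolution is exactly the paper's observation that $\ft{\sigma_1}$ does not vanish identically on any interval, so $\int_{2\pi s/5}^{2\pi s/4}u^2J_0(u)^2\,du$ is bounded below for $s$ in a compact set. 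One small slip: the error term in $\Abs{\ft{\sigma_R}(\xi)}^2$ is $O(\Abs\xi^{-2})$, not $O(1)$; this is harmless, since after the $R$-integration it contributes $O(N\Abs\xi^{-2})$, which is dominated by the main term $N^2/\Abs\xi$ precisely in the regime $N\Abs\xi\gg1$ where the asymptotic is being used.
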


Let us remark here that the fact that we obtain, in Theorem \ref{th:arcs}, arcs and not full circles of large discrepancy, is a byproduct of looking at a finite part of the infinite checkerboard. In other words, the circle of large discrepancy in the $N \times N$ checkerboard which is proved to exist in Theorem \ref{th:circles}, translates only to an arc in the infinite checkerboard as it may not entirely line inside the $N \times N$ square.

\begin{proof}
Let $\sigma_t$ be arc-length measure on a circle of center $0$ and radius $t$. We have the estimate (see e.g.\ \cite{wolff})
\beql{asymptotics}
\ft{\sigma_1}(x) = 2 r^{-1/2} \cos\left(2\pi r - \frac{\pi}{4}\right) + O(r^{-3/2}),
	\ \ (\mbox{ as $r=\Abs{x}\to\infty$}).
\eeq
We also have $\ft{\sigma_t}(\xi) = t \ft{\sigma_1}(t\xi)$.
\begin{lemma}
For any constants $c_0 > 0, c_1 > 1$ there is a constant $c_2>0$ such that for all $x>c_0$ we have
\beql{constant-lb}
\int_x^{c_1 x} \Abs{\ft{\sigma_1}(u)}^2\,du \ge c_2.
\eeq
\end{lemma}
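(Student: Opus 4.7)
Since $\sigma_1$ is rotation-invariant, $\widehat{\sigma_1}$ is radial, so $u$ may be regarded as a positive real. I would substitute the asymptotic expansion \eqref{asymptotics} into $|\widehat{\sigma_1}(u)|^2$. Writing $\widehat{\sigma_1}(u) = 2u^{-1/2}\cos(2\pi u - \pi/4) + E(u)$ with $E(u) = O(u^{-3/2})$, the cross term and the squared remainder are each $O(u^{-2})$, so
\[
|\widehat{\sigma_1}(u)|^2 = \frac{4\cos^2(2\pi u-\pi/4)}{u} + O(u^{-2}).
\]
Applying $2\cos^2\theta = 1+\cos 2\theta$ with $\theta = 2\pi u-\pi/4$ (which gives $\cos 2\theta = \sin 4\pi u$) and noting $\int_x^{c_1 x}u^{-2}\,du = O(x^{-1})$, the integral in \eqref{constant-lb} becomes
\[
\int_x^{c_1 x}|\widehat{\sigma_1}(u)|^2\,du = 2\log c_1 + 2\int_x^{c_1 x}\frac{\sin 4\pi u}{u}\,du + O(x^{-1}).
\]

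Next I would show that the oscillatory integral is itself $O(x^{-1})$: a single integration by parts (integrate $\sin 4\pi u$, differentiate $1/u$) produces boundary terms of size $O(x^{-1})$ and a remaining integral of $u^{-2}\cos 4\pi u$ that is trivially $O(x^{-1})$ in absolute value. Hence for all $x \ge x_0$ with $x_0$ sufficiently large, the right-hand side exceeds, say, $\log c_1 > 0$.

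For the remaining compact range $c_0 \le x \le x_0$, the map $x \mapsto \int_x^{c_1 x}|\widehat{\sigma_1}(u)|^2\,du$ is continuous and strictly positive: the integrand is the modulus squared of a real-analytic function (the Fourier transform of the compactly supported measure $\sigma_1$) that cannot vanish on any interval of positive length. By compactness the integral attains a positive minimum on $[c_0,x_0]$, and taking the smaller of this minimum and $\log c_1$ furnishes $c_2$. The only real obstacle is a bit of bookkeeping: verifying that the $O(u^{-2})$ remainder from squaring the asymptotic, as well as the oscillatory integral, really integrate to $O(x^{-1})$, so that they cannot swallow the positive main term $2\log c_1$ once $x$ is large.
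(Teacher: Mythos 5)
Your proof is correct, and its overall skeleton is the same as the paper's: use the asymptotic expansion \eqref{asymptotics} to handle large $x$, and for the remaining compact range $c_0\le x\le x_0$ invoke continuity together with the fact that $\ft{\sigma_1}$ cannot vanish on an interval (plus compactness) to get a positive minimum. The one place where you genuinely diverge is the large-$x$ estimate: the paper settles for the crude lower bound $\Abs{\ft{\sigma_1}(u)}^2\ge \frac{2}{u}\One{\Abs{\cos(2\pi u-\tfrac{\pi}{4})}\ge 0.9}$ and then argues that the set where the cosine is large occupies a fixed fraction of each period, so the integral of $1/u$ over that set is bounded below once $[x,c_1x]$ contains a full period. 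You instead evaluate the main term exactly via $2\cos^2\theta=1+\cos 2\theta$, obtaining the explicit constant $2\log c_1$, and kill the oscillatory piece by one integration by parts. Your version is more quantitative (it identifies the asymptotic value of the integral, not just a lower bound) at the cost of slightly more bookkeeping with the error terms; the paper's version avoids any cancellation argument entirely. Both are complete proofs.
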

\begin{proof}
From \eqref{asymptotics} there is a constant $A>0$ such that for $x>A$ we have (again $r = \Abs{x}$)
$$
\Abs{\ft{\sigma_1}(x)}^2 \ge \frac{2}{r}  \One{\Abs{\cos(2\pi r-\frac{\pi}{4})} \ge 0.9}.
$$
As long as $x>A$ then the quantity in \eqref{constant-lb} is
$$
\ge \int_x^{c_1 x} \frac{2}{r} \One{\Abs{\cos(2\pi r-\frac{\pi}{4})} \ge 0.9} \,dr.
$$
This in turn is bounded from below by
$$
C \int_x^{c_1 x} \frac{1}{r} \,dr \ge C,
$$
as long as the interval of integration contains a full period of the cosine involved.
So we have proved \eqref{constant-lb} for $x$ larger than an absolute constant $C>0$.

For $c_0 \le x \le C$ we only have to observe that the function $\ft{\sigma_1}(r)$ 
does not vanish identically in any interval.
\end{proof}

For a function $f:\RR^2\to\CC$ define the circle discrepancy 
$$
D_t(x) = \int_{C(x,t)} f,
$$
where $C(x,t)$ is a circle of radius $t$ centered at $x$.
We have $D_t(x) = f*\sigma_t (x)$ hence $\ft{D_t}(\xi) = \ft{f}(\xi) \ft{\sigma_t}(\xi)$ so
\beql{ltwo}
\int \Abs{D_t(x)}^2\,dx = \int \Abs{\ft{D_t}(\xi)}^2\,d\xi = \int \Abs{\ft{f}(\xi)}^2 \Abs{\ft{\sigma_t}(\xi)}^2\,d\xi.
\eeq

We now use the following lemma.
\begin{lemma}\label{lm:decay}
For sufficiently large $A>0$ and sufficiently small $a>0$ we have
$$
\int_{\frac{a}{N}<\Abs{\xi}<A} \Abs{\ft{f}(\xi)}^2\,d\xi \ge \frac{1}{3} \int \Abs{\ft{f}(\xi)}^2\,d\xi = \frac13 N^2.
$$
\end{lemma}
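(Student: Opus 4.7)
The plan is to split the complement of the desired annulus into a low-frequency ball $\Set{\Abs{\xi}\le a/N}$ and a high-frequency region $\Set{\Abs{\xi}\ge A}$ and bound the $L^2$-mass of $\ft{f}$ on each piece by $N^2/3$. Since $f$ takes values in $\Set{-1,+1}$ on $[0,N]^2$ and vanishes outside, Plancherel gives $\int \Abs{\ft{f}}^2 = \int \Abs{f}^2 = N^2$, which accounts for the equality on the right of the statement. Subtracting the two endpoint estimates then yields the claimed lower bound on the annulus $a/N<\Abs{\xi}<A$.

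For the low-frequency piece, the trivial bound $\Linf{\ft{f}}\le \Lone{f} \le N^2$ gives
$$\int_{\Abs{\xi} < a/N} \Abs{\ft{f}(\xi)}^2\,d\xi \le N^4\cdot\pi(a/N)^2 = \pi a^2 N^2,$$
which is at most $N^2/3$ as soon as $a\le (3\pi)^{-1/2}$.

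For the high-frequency piece I would exploit the cell structure of $f$. Writing $f=\sum_{0\le i,j<N} \epsilon_{ij}\,\chi_{(i,j)+[0,1)^2}$ with $\epsilon_{ij}\in \Set{-1,+1}$, one has the factorization $\ft{f}(\xi)=P(\xi)\,\ft{\chi_{[0,1)^2}}(\xi)$ with $P(\xi)=\sum_{i,j}\epsilon_{ij}\,e^{-2\pi i(i\xi_1+j\xi_2)}$; this $P$ is $\ZZ^2$-periodic and satisfies $\int_{[0,1)^2}\Abs{P}^2=N^2$ by orthogonality of characters. From $\Abs{\ft{\chi_{[0,1)^2}}(\xi)}^2=\pi^{-4}\xi_1^{-2}\xi_2^{-2}\sin^2(\pi\xi_1)\sin^2(\pi\xi_2)$ and the classical identity $\sum_{m\in\ZZ}\pi^{-2}\sin^2(\pi\eta)/(m+\eta)^2=1$ applied in each coordinate, one obtains the partition-of-unity $\sum_{m\in\ZZ^2}\Abs{\ft{\chi_{[0,1)^2}}(\eta+m)}^2=1$ for every $\eta\in[0,1)^2$. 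Partitioning $\Set{\Abs{\xi}>A}$ into unit cells and using the periodicity of $P$ then gives
$$\int_{\Abs{\xi}>A}\Abs{\ft{f}}^2\,d\xi\le\int_{[0,1)^2}\Abs{P(\eta)}^2\left(\sum_{\Abs{m+\eta}>A}\Abs{\ft{\chi_{[0,1)^2}}(\eta+m)}^2\right) d\eta.$$

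It remains to bound the inner tail uniformly in $\eta$. The event $\Abs{m+\eta}>A$ forces $\Abs{m_k+\eta_k}>A/\sqrt{2}$ for some $k\in\Set{1,2}$, so the inner sum is dominated by the two one-dimensional tails $\sum_{\Abs{m_k+\eta_k}>A/\sqrt{2}} \pi^{-2}\sin^2(\pi\eta_k)/(m_k+\eta_k)^2$, each of size $O(1/A)$ uniformly in $\eta$ (the complementary $m_{3-k}$ sum being identically $1$). Plugging this in, $\int_{\Abs{\xi}>A}\Abs{\ft{f}}^2\le (C/A)N^2\le N^2/3$ for $A$ sufficiently large, which together with the low-frequency estimate completes the proof. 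The only step requiring any thought is the partition-of-unity identity; once it is in place, the high-frequency bound reduces to a soft one-dimensional tail estimate.
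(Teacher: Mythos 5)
Your proof is correct and follows the same decomposition as the paper's: the low-frequency ball is handled by the identical trivial bound $\Abs{\ft{f}}\le\Lone{f}=N^2$, and the high-frequency tail is shown to be small so that the annulus retains a definite fraction of the Plancherel mass $N^2$. The only difference is that the paper simply cites the high-frequency estimate $\int_{\Abs{\xi}<A}\Abs{\ft{f}}^2\ge\frac12\int\Abs{\ft{f}}^2$ from \cite{kolountzakis}, whereas you reprove it via the factorization $\ft{f}=P\cdot\ft{\chi_{[0,1)^2}}$ and the partition-of-unity identity $\sum_{m\in\ZZ^2}\Abs{\ft{\chi_{[0,1)^2}}(\eta+m)}^2=1$ --- a correct, self-contained argument that is essentially the one in the cited reference.
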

\begin{proof}
In \cite{kolountzakis} Lemma \ref{lm:decay} is proved in the form
$$
\int_{\Abs{\xi}<A} \Abs{\ft{f}(\xi)}^2\,d\xi \ge \frac{1}{2} \int \Abs{\ft{f}(\xi)}^2\,d\xi.
$$
To obtain the extra restriction $\Abs{\xi} > a/N$ we notice that for all $\xi$
$$
\Abs{\ft{f}(\xi)} \le \Lone{f} = N^2,
$$
from which it follows that
$$
\int_{\Abs{\xi} \le \frac{a}{N}} \Abs{\ft{f}(\xi)}^2\,d\xi \le C a^2 N^{-2} N^4 = C a^2 N^2.
$$
Now we choose the constant $a>0$ to be sufficiently small.
\end{proof}

Let $\alpha=1/5, \beta=1/4$. Then
\begin{eqnarray*}
\int_{\alpha N}^{\beta N} \int \Abs{D_t(x)}^2 \,dx\, dt &=& \int_{\alpha N}^{\beta N} \int \Abs{\ft{D_t}(\xi)}^2\,d\xi\, dt\\
 &\ge& \int_{\alpha N}^{\beta N}
 \int_{\frac{a}{N} \le \Abs{\xi} \le A} \Abs{\ft{D_t}(\xi)}^2\,d\xi \, dt\\
 &=& \int_{\frac{a}{N} \le \Abs{\xi} \le A} \Abs{\ft{f}(\xi)}^2 \int_{\alpha N}^{\beta N}
	 \Abs{\ft{\sigma_t}(\xi)}^2 \,dt \, d\xi\\
 &=& \int_{\frac{a}{N} \le \Abs{\xi} \le A} \Abs{\ft{f}(\xi)}^2
	\int_{\alpha N}^{\beta N} t^2 \Abs{\ft{\sigma_1}(t\xi)}^2 \,dt \, d\xi\\
 &\ge & \alpha^2 N^2 \int_{\frac{a}{N} \le \Abs{\xi} \le A} \Abs{\ft{f}(\xi)}^2
	\frac{1}{\Abs{\xi}} \int_{\alpha\Abs{\xi}N}^{\beta\Abs{\xi}N} \Abs{\ft{\sigma_1}(u)}^2 \,du \, d\xi.
\end{eqnarray*}
Since $\Abs{\xi} \ge \frac{a}{N}$ in the region of integration it follows that
$\alpha\Abs{\xi}N \ge C$ hence we may use \eqref{constant-lb} to bound
\begin{eqnarray*}
\int_{\alpha N}^{\beta N} \int \Abs{D_t(x)}^2 \,dx dt & \ge &
	C \frac{\alpha^2 N^2}{A} \int_{\frac{a}{N} \le \Abs{\xi} \le A} \Abs{\ft{f}(\xi)}^2\\
 &\ge& C N^4,
\end{eqnarray*}
from Lemma \ref{lm:decay},
which implies that there are $x \in (-2N,2N)^2$ and $t \in (N/5, N/4)$ such that $\Abs{D_t(x)}^2 \ge CN$.
\end{proof}

\section{Proof of the upper bound for the straight line segment}
\label{sec:upper}

In this section we give a proof of Theorem \ref{th:upper}.
First we construct such a coloring function $f$ for fixed $\epsilon$ and then we point out how to construct a single function $f$ which works for all $\epsilon>0$.

We will color the infinite checkerboard so that the inequality 
\beql{length-bound}
\Abs{\int_I f} \le \phi(\Abs{I}) := K \Abs{I}^{1/2+\epsilon}
\eeq
holds for each segment $I$,
where $f$ is the coloring function, which takes the values $+1$ or $-1$
in each cell of the infinite checkerboard.
The constant $K=K_\epsilon$ may depend on $\epsilon$ only.

Let
$$
N_1=2 \mbox{ and }N_{k+1} = N_k  M_k,
$$
where the odd integer $M_k\ge 2$ will be determined later (it will be of the order of some power
of $\log N_k$).
We will inductively color $N_k \times N_k$ checkerboards,
calling the coloring function $f_k$.
These checkerboards will be centered so that their union is the entire plane and
the coloring of the central $N_k \times N_k$ checkerboard will be preserved when we
go to the $N_{k+1} \times N_{k+1}$ checkerboard.
In this fashion a coloring of the entire plane is defined. (See Fig.\ \ref{fig:recursive}.)

\begin{figure}[h]
\begin{center} \resizebox{8cm}{!}{\input 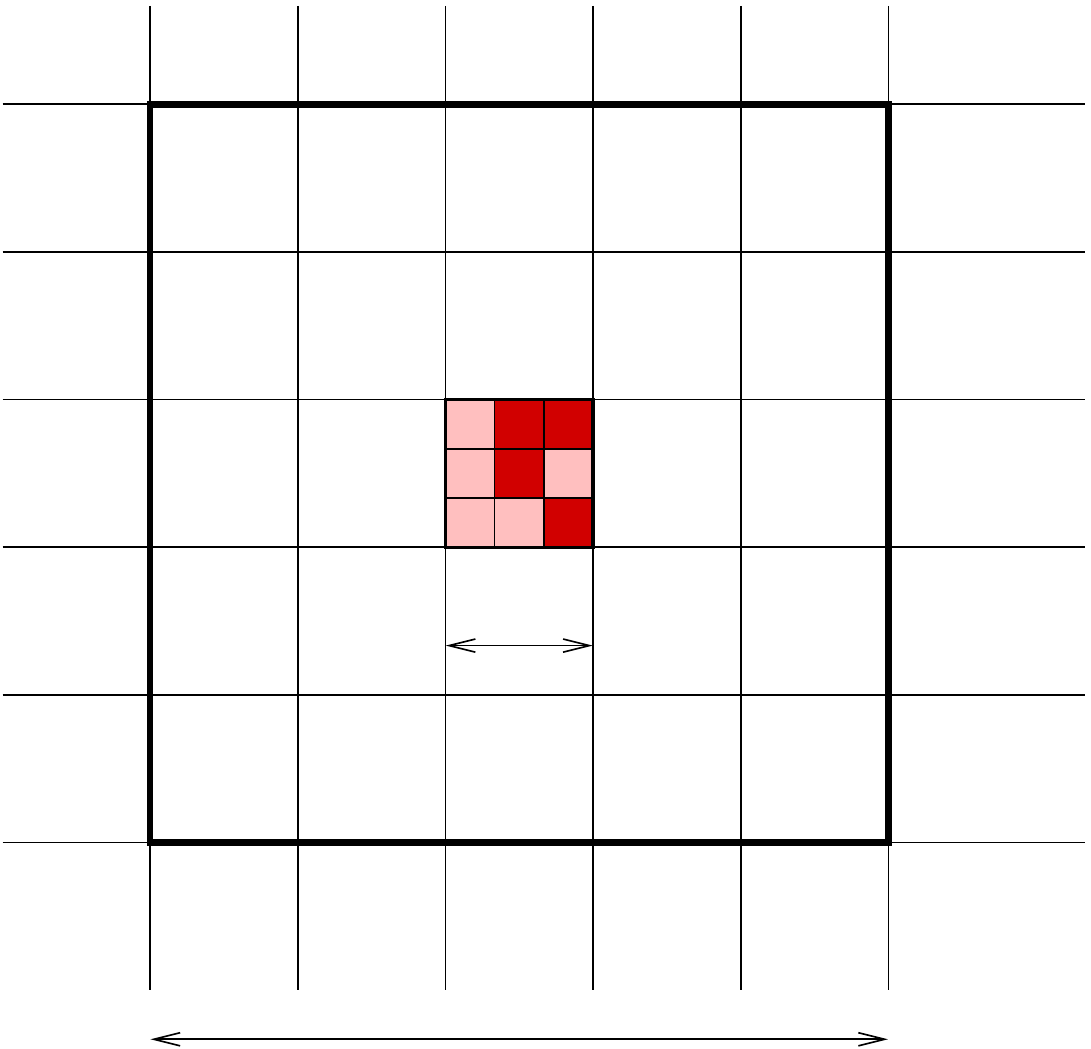_t} \end{center}
\caption{Extending the coloring from the middle super-cell}
\label{fig:recursive}
\end{figure}

Our $N_{k+1} \times N_{k+1}$ checkerboard consists of $M_k \times M_k$ copies
(super-cells) of a $N_k \times N_k$ checkerboard.
For $\ell > 0$ write $k(\ell)$ for the minimum index $k$ such that $N_k \ge \ell$.
It follows that every segment $I$ in the plane can be broken
into at most, say, 10 segments each of which is entirely contained in a super-cell of size
$N_{k(\Abs{I})} \times N_{k(\Abs{I})}$.

The proposition that we prove by induction on $k$ is:
\begin{quotation}
If the segment $I$ is contained in a single $N_k \times N_k$ super-cell then
\beql{single-cell}
\Abs{\int_I f_k} \le \frac{\phi(N_k)}{100} = \frac{1}{100} K N_k^{\frac12 + \epsilon}.
\eeq
\end{quotation}

Define $f_1$ arbitrarily in the initial $2 \times 2$ checkerboard
and $0$ outside that. Clearly \eqref{single-cell} is valid with, say, $K=500$.
We now assume the existence of a coloring function $f_k$
for which \eqref{single-cell} holds
and will construct the function $f_{k+1}$ to preserve \eqref{single-cell}.

The coloring in the $N_{k+1} \times N_{k+1}$ checkerboard (function $f_{k+1}$) will be defined
using the $N_k \times N_k$ coloring (function $f_k$)
by copying it in each $N_k \times N_k$ super-cell of the
$N_{k+1} \times N_{k+1}$ checkerboard multiplied by a random $\pm 1$ sign.
We insist that the coloring of the central $N_k \times N_k$ super-cell is preserved,
by possibly multiplying everything by $-1$.
We will now show that there exists a choice of the signs
that will ensure that the new function $f_{k+1}$ satisfies \eqref{single-cell}.

As we did in \cite{kolountzakis} we observe that there is a set ${\mathcal S}$ of line segments
in the $N_{k+1} \times N_{k+1}$ checkerboard, of size $O((N_{k+1})^{D})$, $D$ an absolute constant,
such that \eqref{single-cell} holds for {\em all} line segments if it can be proved to hold
for the line segments in ${\mathcal S}$ with the constant $K$ replaced by, say, $K/3$.

Fix now a line segment $I \in {\mathcal S}$.
Let $I_j$ be the intersections of $I$ with the $N_k \times N_k$
super-cells, $j=1,\ldots,n(I)$, $n(I) \le C M_k$,
and let $\epsilon_j = \pm1$ be the corresponding signs used
in the construction of $f_{k+1}$ from copies of $f_k$. We have
$$
\int_I f_{k+1} = \sum_{j=1}^{n(I)} \epsilon_j d_j,\ \mbox{where } d_j = \int_{\widetilde{I_j}} f_k,
$$
and $\widetilde{I_j}$ is the line segment $I_j$ translated
by an element of $N_k\ZZ^2$
to lie in the central $N_k \times N_k$ checkerboard where the function $f_k$ is non-zero.

We now use the following standard estimate (see for instance \cite[Appendix A, Theorem A.1.18]{alon-spencer}):
\begin{othm}\label{th:large-deviations}
{\rm (See for instance \cite[Appendix A, Theorem A.1.18]{alon-spencer}.)}\\
Suppose $X_i$, $i=1,2,\ldots,n$, are independent random variables with $\Mean{X_i} = 0$ and
with the range of each $X_i$ having diameter at most 1.
Write $S=X_1+\cdots+X_n$. Then
$$
\Prob{\Abs{S} > t} \le 2 e^{-2t^2/n}.
$$
\end{othm}
By our inductive hypothesis $\Abs{d_j} \le \phi(N_k)/ 100$.
Therefore, we obtain from Theorem \ref{th:large-deviations}
the inequality 
$$
\Prob{\Abs{\sum_{j=1}^{n(I)} \epsilon_j d_j} > t\cdot 2\phi(N_k)/ 100} \le 2e^{-2t^2/n(I)},
$$
and choosing $t = C(M_k \log N_k)^{1/2}$ for a sufficiently large constant $C$
(depending on the value of $D$ only) we obtain
\beql{estimate}
\Prob{\Abs{\int_I f_{k+1}} \ge C \sqrt{M_k \log N_k} \phi(N_k)} \le C (N_k)^{-(D+1)}.
\eeq
This guarantees that, with positive probability, all segments in ${\mathcal S}$
have discrepancy bounded above by
$$
C \sqrt{M_k\log N_k} \phi(N_k).
$$

We still have to ensure that
$$
C \sqrt{M_k\log N_k} \phi(N_k) \le \frac{1}{3} \phi(N_{k+1}) / 100.
$$
This is easily seen to follow from the choice $M_k = C \log^{1/(2\epsilon)} N_k$ for large enough $C>0$.
This completes the inductive proof of \eqref{single-cell}.

Let now $I$ be any line segment in the plane. Then $I$ can be broken up into at most $3$ segments $J$ with $J$ being contained in a single $N_k\times N_k$ super-cell, where $k = k(\Abs{I})$.
Since $N_k \le C_\delta \Abs{I}^{1+\delta}$ for any $\delta>0$ it follows that the discrepancy of $I$ is at most $C_\epsilon' \Abs{I}^{\frac{1}{2}+\epsilon'}$ for any $\epsilon' > \epsilon$. This concludes the proof of how to construct a coloring $f$ which satisfies \eqref{length-bound} for a single $\epsilon>0$. If one desires to have a coloring $f$ which satisfies \eqref{length-bound} for all $\epsilon>0$ (with different constants of course) all one has to do is to let $\epsilon \to 0$ slowly during the construction. 

\section{Remarks about $L^p$ bounds on the discrepancy function}\label{sec:lp}

Let us restrict our attention to a finite $N\times N$ checkerboard coloring with coloring function
$f:\RR^2\to\Set{-1,+1}$.
So far, in this paper and in \cite{kolountzakis}, we have examined two discrepancy functions: the
circle discrepancy $D_t(x)$ defined in \S\ref{sec:lower} as well as the line discrepancy function studied
in \cite{kolountzakis} and in \S\ref{sec:upper} of this paper. Let us denote the line discrepancy function
by $\Delta_u(x)$ where $u \in S^1$ is a unit vector and $x \in \RR$:
$$
\Delta_u(x) = (\pi_u f)(x) = \int f(x\cdot u + y\cdot u^\perp)\,dy.
$$
Here $\pi_u f$ denotes the one-variable projection of $f$ onto the line defined by $u$ and $u^\perp$
is a unit vector orthogonal to $u$.

In \cite{kolountzakis} and in \S\ref{sec:lower} of this paper we proved that
$$
\sup_{u,x} \Abs{\Delta_u(x)} \ge C N^{1/2}\mbox{\ and\ }
 \sup_{t,x} \Abs{D_t(x)} \ge C N^{1/2}.
$$
It is natural and customary in the field of discrepancy bounds \cite{beck-chen,matousek}
to study several measures of size for the discrepancy functions, such as their $L^p$ norms.
An appropriate way to define the $L^p$ discrepancy of $f$ for the two cases we are studying
is the following.
\begin{definition}
The circle $L^p$ discrepancy ($1\le p < \infty$) of a coloring $f$ is
\beql{lp-circle}
D(f,p) = \left( \frac{1}{N^3} \int_{N/5}^{N/4} \int \Abs{D_t(x)}^p \,dx\,dt \right)^{1/p}.
\eeq
The line $L^p$ discrepancy is
\beql{lp-line}
\Delta(f,p) = \left( \frac{1}{N} \int_{S^1} \int \Abs{\Delta_u(x)}^p \,dx\,du \right)^{1/p}.
\eeq
The sup discrepancies are $D(f,\infty) = \sup_{x,t} \Abs{D_t(x)}$ and
$\Delta(f,\infty) = \sup_{u,x} \Abs{\Delta_u(x)}$.
\end{definition}
The reason for the factors $N^{-3}$ and $N^{-1}$ in \eqref{lp-circle} and \eqref{lp-line}
is to almost normalize the measure and make the different $L^p$ norms comparable. Indeed, in the case of
$D_t(x)$ (circle) the range for $t$ is $\sim N$ and that of $x$ is $\sim N^2$, and in the case of
$\Delta_u(x)$ the range of $u$ is constant while that of $x$ is $\sim N$ (beyond these bounds the functions obviously vanish).
With these definitions we have $D(f,p_1) \le C_{p_1, p_2} D(f,p_2)$ for any $p_1 < p_2$ ($\infty$ included) and similarly for the line discrepancies.

We now point out that the proof given in \S\ref{sec:lower} is essentialy that
$$
D(f,2) \ge C N^{1/2},
$$
which clearly implies the same bound for the sup discrepancy $D(f,\infty)$.
Similarly in \cite{kolountzakis} we not only proved that $\Delta(f,\infty) \ge C N^{1/2}$
but also that $\Delta(f,2) \ge C N^{1/2}$.

To summarize, the behavior of $D(f,p)$ and $\Delta(f,p)$ is now essentially known (at least up to
logarithmic factors) for $p\ge 2$. Unfortunately not much is known for $p<2$. The following example shows
that the study of these quantities is probably a lot harder that the $L^2$ discrepancies.
Color the $N \times N$ checkerboard as shown in Fig.\ \ref{fig:lone}, that is paint each row with
a single color and alternate these colors from each row to the next.
This is a horrible example as far as $\Delta(f,\infty)$ is concerned. It is easily seen that
$\Delta(f,\infty) \ge C\cdot N$ since every horizontal line contains one color only.
However $\Delta(f,1)$ is a lot smaller.
\begin{figure}[h]
 \begin{center} \resizebox{5cm}{!}{\input 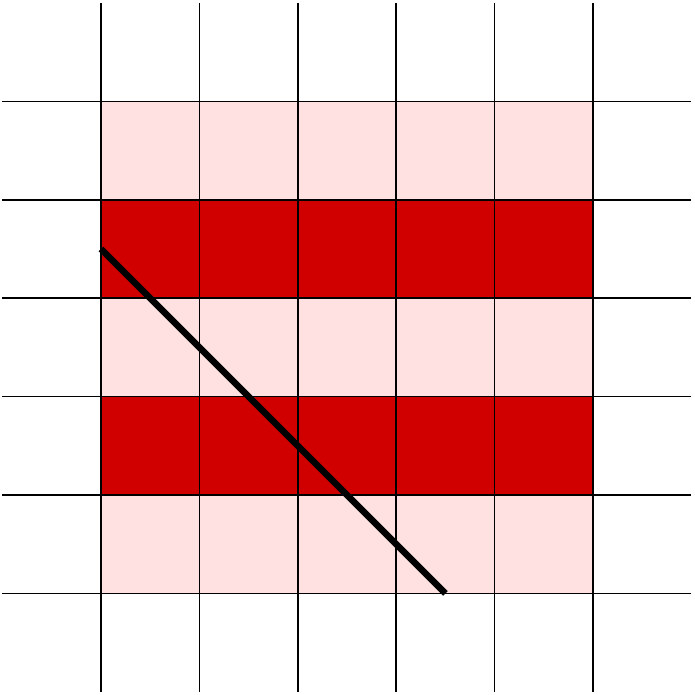_t} \end{center}
\caption{A coloring with $\Delta(f,1) \sim \log N$}
\label{fig:lone}
\end{figure}
\begin{theorem}\label{th:lone}
For the coloring shown in Fig.\ \ref{fig:lone} we have $\Delta(f,1) \sim \log N$.
\end{theorem}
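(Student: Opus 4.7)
\smallskip

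My plan is to exploit the row-constant structure of $f$ directly. Writing $g(y) = (-1)^{\lfloor y \rfloor}\mathbf{1}_{[0,N)}(y)$ and $G(u) = \int_0^u g(v)\,dv$, one has $f(x,y) = g(y)\mathbf{1}_{[0,N]}(x)$, and $G$ is a nonnegative triangular wave with period $2$, bounded by $1$ (assume $N$ even so $G(N)=0$). For $\theta \in (0, \pi/2)$, the substitution $u = x\sin\theta + y\cos\theta$ inside
$$
\Delta_u(x) = \int g(x\sin\theta + y\cos\theta)\,\mathbf{1}_{[0,N]}(x\cos\theta - y\sin\theta)\,dy
$$
yields the explicit formula
$$
\Delta_u(x) = \frac{1}{\cos\theta}\bigl[G(\min(s,N)) - G(\max(0,s-d))\bigr],\quad s = \frac{x}{\sin\theta},\ \ d = N\cot\theta.
$$

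Next I would change variables $s = x/\sin\theta$ to obtain $\int|\Delta_u(x)|\,dx = \tan\theta \cdot I(d)$. For $\theta \le \pi/4$ (so $d \ge N$), the supports of $G(s)$ and $G(s-d)$ are disjoint and $I(d) = 2\int_0^N |G| \asymp N$, giving a total angular contribution of $O(N)$ after integrating $\int_0^{\pi/4} N\tan\theta\,d\theta$. For $\theta \in [\pi/4, \pi/2)$ (so $d \le N$) I split $I(d)$ into the two edge pieces $\int_0^d |G(s)|\,ds + \int_{N-d}^N |G|\,du$, each of order $\min(d, d^2/2) + O(1)$, and the middle piece $\int_d^N |G(s) - G(s-d)|\,ds$. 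The middle piece is where the key periodic structure of $G$ enters: using the fact that $G$ has period $2$, one verifies
$$
\int_0^2 |G(s) - G(s-\delta)|\,ds = \delta(2-\delta) \quad\text{for } \delta \in [0,2],
$$
so the middle piece equals $\tfrac{N-d}{2}\,h(d) + O(1)$ with $h$ the $2$-periodic extension of $\delta(2-\delta)$.

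The decisive step is to integrate over $\theta$. Substitute $v = \cot\theta$, so $d = Nv$, $\tan\theta = 1/v$, $d\theta = -dv/(1+v^2)$. The middle piece contributes, up to bounded factors,
$$
\int_0^1 \frac{N(1-v)}{2v(1+v^2)}\, h(Nv)\,dv,
$$
while the edge pieces and the $\theta \le \pi/4$ regime together give only $O(N)$. The integrand has an explicit $1/v$ singularity at $v = 0$, and $h(Nv)$ is a nonnegative $2/N$-periodic function with mean $2/3$ and supremum $1$. Thus the upper bound $h(Nv) \le 1$ gives $\int_{1/N}^1 dv/v = \log N$, while for the lower bound $h(Nv) \ge c > 0$ on a set of positive density in each period yields $\int_{1/N}^1 h(Nv)/v\,dv \gtrsim \log N$; the contribution from $v \in (0, 1/N)$, where $h(Nv) \approx 2Nv$, is at most $O(N)/N = O(1)$ after normalization. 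Combining with the four-fold symmetry $\theta \mapsto \pi-\theta,\ \theta+\pi$, the total angular integral is $\asymp N \log N$, and dividing by $N$ gives $\Delta(f,1) \asymp \log N$.

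The main obstacle is controlling the oscillatory integral $\int_{1/N}^1 h(Nv)/v\,dv$ from below: $h(Nv)$ vanishes at $v = 2k/N$ for every integer $k$, so one cannot bound $h$ from below pointwise and must use a set-of-positive-density argument on each logarithmic dyadic scale. Once this is handled, everything else is routine careful accounting of edge effects and change-of-variable Jacobians.
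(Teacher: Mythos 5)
Your proposal is correct, and the underlying mechanism is the same as in the paper's proof: whole rows cancel in pairs, so the discrepancy of a line at angle $\theta$ from the horizontal rows is an end effect of size at most the length cut from one unit strip, and integrating this $\min(1/\sin\theta, CN)$-type bound over directions produces the $\log N$. What you do differently is to replace the paper's qualitative row-cancellation argument by an exact closed form: $\Delta_u(x)=\frac{1}{\cos\theta}\bigl[G(s)-G(s-d)\bigr]$ with $G$ the triangular-wave antiderivative, the identity $\int_0^2\Abs{G(s)-G(s-\delta)}\,ds=\delta(2-\delta)$, and the reduction of the whole double integral to $\int_{1/N}^1 h(Nv)\,dv/v$. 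This buys you something real: the paper's lower bound consists of the single unproved assertion that for fixed $\theta$ the $x$-integral is $\ge CN/\sin\theta$, i.e.\ that a positive proportion of lines at each angle actually attain discrepancy $\gtrsim 1/\sin\theta$ rather than benefiting from cancellation between the two ends; your positive-density lower bound on $h(Nv)$ over each dyadic range of $v$ is precisely the missing justification. (The paper's proof also has its ``upper bound'' and ``lower bound'' labels interchanged in the two paragraphs; your version is unambiguous.) Your bookkeeping of the edge pieces, the $\theta\le\pi/4$ regime, and the $v<1/N$ range, each contributing $O(N)$ before normalization, is correct, as is the parity caveat ($N$ odd only perturbs $G(N)$ by $1$ and changes nothing asymptotically), so I see no gap.
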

\begin{proof}
This is a straightforward calculation. We first observe that for each line the contribution to
the discrepancy will come only at the ends of the segment, where the line intersects the boundary.
Indeed if a line traverses a number $k$ of whole rows then they either contribute 0 to the discrepancy of
the line (if $k$ is even) or at most one of them contributes (if $k$ is odd) an amount equal
to the length of the intersection of the line with one such row (the quantity $L(\theta)$ below).

Therefore the discrepancy of a line will be at most a constant times the length it cuts from a horizontal strip of width 1 and is of course always at most $CN$. If a line makes angle $\theta$ with the $x$-axis then it intersects such a strip at length
$$
L(\theta) = \frac{1}{\sin\theta}.
$$
When computing the $L^1$-discrepancy and we carry out the direction integration first we are computing (four times) the integral
$$
\int_{0}^{\pi/2} \min\Set{\frac{1}{\sin\theta}, CN} \,d\theta \sim C\log N,
$$
since $CN$ becomes the minimum from $\theta=0$ to $\theta\sim 1/N$.
Integration then over $x$ will contribute $CN$ which is cancelled by the normalization in the definition of $\Delta(f,1)$. This proves the lower bound $\Delta(f,1) \ge C \log N$.

To prove the upper bound one keeps $\theta$ fixed and integrates along $x$ first. The contribution to the integral will be $\ge C N / \sin\theta$. One then integrates for $\theta \in (1/N, \pi/2)$ and normalizes
to obtain the lower bound $\ge C \log N$.
\end{proof}

\section{Open problems}\label{sec:open-problems}

We finish with a list of questions. We owe questions (\ref{it:fixed-radius}) and (\ref{it:periodic-coloring}) to \href{http://www.renyi.hu/~emarci/}{M\'arton Elekes}.
\begin{enumerate}
\item 
What is the true order of magnitude of $\Delta(f,1)$ and of $\Delta(f,p)$ for $1 \le p \le 2$,
at least up to logarithmic factors?

We believe that $\Delta(f,1) \ge C \log N$ but it seems that this would be substantially harder to prove than the current $L^2$ lower bounds.

\item
Is there a coloring $f$ for which $D(f,1)$ is much less than $N^{1/2}$? In other words, is there a coloring which plays for circular arcs the role that the coloring of Fig.\ \ref{fig:lone} plays for the lines?

We believe not.

\item
Obtain a lower bound for the discrepancy of a shape that consists of straight line segments, for example an orthogonal isosceles L-shape which is free to translate, dilate and rotate in the plane.

In other words, although each of the two arms of the L-shape must have a large discrepancy at some placements, it could be that, for some colorings, the two arms conspire to cancel each other's discrepancy.

\item\label{it:fixed-radius}
In Theorem \ref{th:arcs} can one restrict the radius of the circle to equal to, say, $N/5$, instead of just lying in the interval $(N/5, N/4)$?

We believe yes.

\item\label{it:periodic-coloring}
Can one find a periodic coloring for which the upper bound of Theorem \ref{th:upper} holds?

Probably not.

\end{enumerate}


\end{document}